\DeclareMathOperator{\cf}{cf}
\newtheorem{theorem}{Theorem}
\newtheorem{fact}[theorem]{Fact}
\newtheorem{definition}[theorem]{Definition}
\newtheorem{lemma}[theorem]{Lemma}
\newtheorem{claim}[theorem]{Claim}
\newtheorem*{theorem*}{Theorem}
\newtheorem*{question*}{Question}
\begin{document}
\title{A note on the normal filters extension property}
\author{Yair Hayut}
\address{Kurt G\"odel Research Center,
Institut f\"ur Mathematik,
Universit\"{a}t Wien, 
Wien 1020,
Austria
}
\thanks{This research was partially supported by the FWF Lise Meitner grant, 2650-N35}
\email{yair.hayut@univie.ac.at}
\begin{abstract}
We show that if $\lambda^{<\kappa}  = \lambda$ and every normal filter on $P_\kappa\lambda$ can be extended to a $\kappa$-complete ultrafilter then so does every $\kappa$-complete filter on $\lambda$. This answers a question of Gitik.
\end{abstract}
\maketitle

In \cite{Gitik2018}, Gitik shows that consistently for every stationary set $X \subseteq \kappa$, there is a $\kappa$-complete filter extending the club filter restricted to $X$, and that the consistency strength of this is a weak repeat point, which is a large cardinal axiom weaker than $o(\kappa)=\kappa^{++}$. In this paper, Gitik asks whether the consistency strength of the stronger statement ``every \emph{normal} filter on $\kappa$ extends to a $\kappa$-complete ultrafilter'' is still below $o(\kappa) = \kappa^{++}$. In this paper, we answer Gitik's question, by showing that the extension property for normal filters is equivalent to the extension property for $\kappa$-complete filters, and in particular it implies failure of squares at successive cardinals and has a high consistency strength.
 
Recall that a filter $\mathcal{F}$ on $P_\kappa\lambda$ is \emph{normal} if for every $A \in \mathcal{F}$ and a choice function $f \in \prod A$ (namely, $f\colon A \to \lambda$, $f(x) \in x$), there is an ordinal $\gamma < \lambda$ such that $f^{-1}(\{\gamma\}) \in \mathcal{F}^+$. 

Equivalently, a filter $\mathcal{F}$ on $P_\kappa\lambda$ is normal if it is closed under diagonal intersections:  for every sequence of sets from $\mathcal{F}$, $\langle A_i \mid i < \lambda\rangle$, 
\[\Delta_i A_i := \{a \in P_\kappa\lambda \mid \forall i \in a,\, a \in A_i\} \in \mathcal{F}.\]

\begin{theorem*}
Let $\kappa\leq \lambda$ be cardinals, where $\cf\kappa = \kappa$ and $\lambda^{<\kappa} = \lambda$. The following are equivalent: 
\begin{itemize}
\item Every normal filter on $P_\kappa \lambda$ can be extended to a $\kappa$-complete ultrafilter.
\item Every $\kappa$-complete filter on $\lambda$ can be extended to a $\kappa$-complete ultrafilter. 
\end{itemize}
\end{theorem*}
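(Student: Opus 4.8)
The plan is to treat the two implications separately, using throughout that $\lambda^{<\kappa}=\lambda$ gives $|P_\kappa\lambda|=\lambda$, so that the two domains have the same size and filters on them may be transported along a fixed bijection $b\colon P_\kappa\lambda\to\lambda$. I will also use the standard fact that a normal filter on $P_\kappa\lambda$ is $\kappa$-complete, which follows from closure under diagonal intersections together with fineness.

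For the implication from $\kappa$-complete extension to normal extension, the argument should be essentially formal. Given a normal filter $\mathcal{N}$ on $P_\kappa\lambda$, it is $\kappa$-complete, so its image $b_*\mathcal{N}$ is a $\kappa$-complete filter on $\lambda$. By the second hypothesis it extends to a $\kappa$-complete ultrafilter $W$ on $\lambda$, and $(b^{-1})_*W$ is then a $\kappa$-complete ultrafilter on $P_\kappa\lambda$ extending $\mathcal{N}$. Transport along a bijection preserves both $\kappa$-completeness and the ultrafilter property, so nothing further is needed here.

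The substantive direction is the converse. Given a $\kappa$-complete filter $\mathcal{F}$ on $\lambda$, I would fix a function $g\colon P_\kappa\lambda\to\lambda$ and let $\mathcal{F}^*$ be the normal filter generated by $\{g^{-1}(A) : A\in\mathcal{F}\}$. If $\mathcal{F}^*$ is a proper (normal) filter, the first hypothesis yields a $\kappa$-complete ultrafilter $U\supseteq\mathcal{F}^*$ on $P_\kappa\lambda$, and then the pushforward $g_*U=\{A\subseteq\lambda : g^{-1}(A)\in U\}$ is automatically a $\kappa$-complete ultrafilter on $\lambda$; since each $g^{-1}(A)$ with $A\in\mathcal{F}$ lies in $\mathcal{F}^*\subseteq U$, we obtain $\mathcal{F}\subseteq g_*U$, as required. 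Thus the whole problem reduces to producing a single $g$ for which $\mathcal{F}^*$ is proper.

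The hard part will be exactly this properness. Unwinding the generated normal filter, properness amounts to the following selection statement: for every sequence $\langle C_i : i<\lambda\rangle$ of sets in $\mathcal{F}$ there is a \emph{nonempty} $a\in P_\kappa\lambda$ with $g(a)\in\bigcap_{i\in a}C_i$. The nonemptiness cannot be waived, since the generators $g^{-1}(A)$ force $g(a)\in A$ and so the vacuously-true empty $a$ does not help for nonprincipal $\mathcal{F}$. A naive choice such as $g(a)=\sup a$ fails in general — for a filter concentrating on successor ordinals one can build a $\lambda$-sequence for which no nonempty $a$ works — so the real content is to construct $g$ cleverly. This is where I expect $\lambda^{<\kappa}=\lambda$ to enter essentially: using a bijection $P_\kappa\lambda\cong\lambda$ one can spread the values of $g$ so that, diagonalizing against $\mathcal{F}$ (and using $\kappa$-completeness, which guarantees $\bigcap_{i\in a}C_i\in\mathcal{F}$ for every $a$), any attempt to trap $g(a)$ inside small sets indexed by the elements of $a$ must fail on some nonempty $a$. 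Verifying the existence of such a $g$ is the crux; once it is in hand, the two implications above close the equivalence.
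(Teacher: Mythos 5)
Your first implication (transport along a bijection, using $\lambda^{<\kappa}=\lambda$ and the fact that a fine normal filter contains the club filter and hence is $\kappa$-complete) is fine, and matches what the paper leaves implicit. The problem is the other implication, where your proposal stops exactly where the theorem lives: you reduce everything to producing a single function $g\colon P_\kappa\lambda\to\lambda$ making the generated normal filter proper, and then you defer that existence as ``the crux.'' That deferred step \emph{is} the theorem; nothing else in the proposal is more than bookkeeping. Moreover, your properness criterion is misstated. To collapse the generated normal filter to single diagonal intersections you must work with the normal \emph{fine} filter (re-indexing an iterated diagonal intersection as a single one is only valid on a club, which is exactly the content of Lemma \ref{lemma: diagonal intersection once}), and then properness requires each set $\Delta_{i<\lambda}\,g^{-1}(C_i)$ to be \emph{stationary}, not merely to contain a nonempty element.

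There is also a structural reason to doubt that any single $g$ can work. If some $\Delta_{i<\lambda}\,g^{-1}(C_i)$ misses a club $D$, then pressing down the choice function $a\mapsto i_a\in a$ witnessing $g(a)\notin C_{i_a}$ (Fodor for the club filter on $P_\kappa\lambda$) yields one $C\in\mathcal{F}$ and a stationary $S$ with $g[S]\cap C=\emptyset$; so the only robust way to secure properness is to have $g^{-1}(C)$ contain a club for \emph{every} $C\in\mathcal{F}$, i.e.\ to have the pushforward of the club filter along $g$ already extend $\mathcal{F}$. That cannot be arranged in general: taking $\lambda=\kappa$, it would mean that a single normal measure on $\kappa$ pushes forward onto an extension of every $\kappa$-complete filter on $\kappa$, so mere measurability would yield the full filter extension property --- contradicting the consistency-strength lower bounds (failure of squares, non-domestic mice) that the paper derives from that property; concretely, in $L[U]$ the required $g$ must fail to exist for some $\mathcal{F}$. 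The paper's proof is designed around precisely this obstruction, and uses machinery your plan never touches: hypothesis (1) makes $\kappa$ measurable, hence subtle, hence $\diamondsuit_{\kappa,\lambda}(P_\kappa\lambda)$ holds (Fact \ref{fact: diamond}); from the diamond one builds a \emph{normally independent} family $\{R_X \mid X\subseteq\lambda\}$ of size $2^\lambda$ (Lemma \ref{lemma: normally ind}); into the normal filter one puts not pullbacks along a function but the sets $R_X$ for $X\in\mathcal{F}$ together with constraint sets coding inclusion, complementation and $<\kappa$-intersections, so that $\mathcal{U}\mapsto\{X\subseteq\lambda \mid R_X\in\mathcal{U}\}$ --- which is \emph{not} a pushforward along any function --- carries $\kappa$-complete ultrafilters to $\kappa$-complete ultrafilters extending $\mathcal{F}$; and even then properness is not automatic, but is proved by fixing an auxiliary $\kappa$-complete ultrafilter $\mathcal{U}'$ deciding the $\leq\lambda$ many subsets of $\lambda$ appearing in a given diagonal intersection (via Fact \ref{fact: strong compactness}) and only then invoking normal independence. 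So the gap is not a routine verification left to the reader: the route you propose very likely cannot be completed, and the correct route requires ideas absent from the proposal.
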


In \cite{Hayut2019}, the second assertion in the theorem was shown to be equivalent (for cardinals $\lambda = \lambda^{<\kappa}$) to the $\kappa$-compactness of the generalized logic $\mathcal{L}_{\kappa,\kappa}$ for languages of size $2^\lambda$ which in turn equivalent to the existence of certain elementary embeddings, that behave like local versions for strongly compact embeddings. In particular, assuming that every normal filter on $\kappa$ can be extended to a $\kappa$-complete ultrafilter, both $\square(\kappa)$ and $\square(\kappa^{+})$ fail. Using inner model theoretical results from \cite{Jensen2009StackingMice}, this implies the consistency of a a non-domestic mouse (which is stronger than the consistency of $\mathrm{ZF} + \mathrm{AD}_{\mathbb{R}}$).  

We will use the following definition of clubs at $P_\kappa\lambda$, due to Jech (see \cite{JechHandbook}):
\begin{definition}
Let $\kappa = \cf \kappa \leq \lambda$ be cardinals. A set $C \subseteq P_\kappa\lambda$ is a club if it closed under increasing unions of length $<\kappa$ and for any $x\in P_\kappa \lambda$ there is $y\in C$, $y \supseteq x$.

A set $S\subseteq P_\kappa\lambda$ is stationary if it has a non trivial intersection with every club.
\end{definition}

The club filter is normal and fine (contains the cones $C_\alpha = \{y \in P_\kappa\lambda\mid \alpha \in y\}$ for every $\alpha < \lambda$) and every fine normal filter contains the club filter.

In order to construct the relevant filter in the proof, we will use the following strengthening of independent families.
\begin{definition}
A family $\mathcal{I}$ of subsets of $P_\kappa \lambda$ is \emph{normally independent} if for every pair of disjoint collections $\{A_i \mid i < \lambda\}, \{B_i \mid i < \lambda\} \subseteq \mathcal{I}$, 
$\Delta_{i<\lambda} \left(A_i \setminus B_i\right)$ is stationary.
\end{definition}
In order to construct a normally independent family of , we will assume a guessing principle, $\diamondsuit_{\kappa,\lambda}$, which was defined by Jech in \cite[Section 3]{Jech1972}. We follow the notations of \cite{DonderMatet1993}. 
\begin{definition}
Let $\kappa \leq \lambda$ be cardinals and let $S\subseteq P_\kappa\lambda$. An object $\langle X_a \mid a \in S\rangle$ is a $\diamondsuit_{\kappa,\lambda}(S)$-sequence if for every $Y \subseteq \lambda$ the set $\{a \in S\mid Y \cap a = X_a\}$ is stationary.
\end{definition}
Jech used a two cardinal diamond sequence in order to derive a large almost disjoint collection of stationary subsets of $P_\kappa\lambda$. We will use similar arguments in order to construct a normally independent family.
\begin{lemma}\label{lemma: normally ind}
Let $\kappa \leq \lambda$ be regular cardinals. Assume $\diamondsuit_{\kappa,\lambda}(P_\kappa\lambda)$. There is a normally independent family $\mathcal{I} \subseteq \mathcal{P}(P_\kappa\lambda)$ of cardinality $2^\lambda$. 
\end{lemma}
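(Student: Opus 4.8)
The plan is to imitate Jech's construction of an almost disjoint family of stationary sets from $\diamondsuit_{\kappa,\lambda}$, but to arrange the guessing so that it produces independence rather than almost disjointness. The difficulty is precisely that a raw diamond sequence most naturally yields the almost disjoint sets $S_Y=\{a : X_a = Y\cap a\}$, which is the opposite of what normal independence asks for. I will get around this by using a pairing function to let a single target set code an entire $\lambda$-indexed sequence of subsets of $\lambda$, so that the guess $X_a$, read along its \emph{columns}, simultaneously recovers the traces of all the sets appearing in a prescribed Boolean pattern.

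Concretely, I first fix a bijection $\pi\colon\lambda\times\lambda\to\lambda$ and let $C_1$ be the club of $a\in P_\kappa\lambda$ closed under $\pi$ and its projections. For $a\in C_1$ and $i\in a$ put $R^a_i=\{j\in a : \pi(i,j)\in X_a\}$, the $i$-th column of the guess. The family is then $\mathcal I=\{A_Y : Y\subseteq\lambda\}$, where $A_Y=\{a\in C_1 : \exists i\in a,\ R^a_i = Y\cap a\}$. There are at most $2^\lambda$ such sets, and once independence is established the $A_Y$ are pairwise distinct (applying it to a pattern separating $Y$ from $Y'$), so $|\mathcal I|=2^\lambda$.

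The heart of the argument is verifying normal independence. Given disjoint collections $\{A_{P_i}\},\{A_{Q_i}\}$, I code the sequence $\langle P_i\rangle$ by the single set $\hat Y=\{\pi(i,j) : j\in P_i\}$ and apply $\diamondsuit_{\kappa,\lambda}(P_\kappa\lambda)$ to $\hat Y$, obtaining a stationary $S_{\hat Y}=\{a : X_a=\hat Y\cap a\}$. For $a\in S_{\hat Y}\cap C_1$ and $i\in a$, closure under $\pi$ gives $R^a_i = P_i\cap a$, so $a\in A_{P_i}$ (witnessed by $i$ itself), while $a\in A_{Q_i}$ would force $Q_i\cap a = P_{i'}\cap a$ for some $i'\in a$. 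To exclude the latter I intersect with the club $C_0$ of $a$ closed under $(i,i')\mapsto\min(P_{i'}\triangle Q_i)$; since the two collections are disjoint, $P_{i'}\neq Q_i$ always, so on $C_0$ a separating element lands in $a$ and the traces differ. Thus $S_{\hat Y}\cap C_0\cap C_1\subseteq\Delta_{i<\lambda}(A_{P_i}\setminus A_{Q_i})$, and as a stationary set intersected with clubs this is stationary. Note that it is enough to code the $P_i$, never the $Q_i$: the negative half is handled entirely by the closure club $C_0$.

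The step I expect to be most delicate is the bookkeeping around the \emph{columns}: making sure that reading $R^a_i$ off $X_a=\hat Y\cap a$ really returns $P_i\cap a$ (this is exactly where closure of $a$ under $\pi$ enters, guaranteeing $\pi(i,j)\in a$ whenever $i,j\in a$), and that no spurious column $R^a_{i'}$ accidentally equals some $Q_i\cap a$. Once the closure clubs $C_0,C_1$ are set up correctly the verification is routine; the genuinely new idea beyond Jech's almost disjoint construction is the use of the pairing to turn a single diamond guess into simultaneous control of $\lambda$-many traces, which is precisely what the diagonal-intersection form of normal independence demands.
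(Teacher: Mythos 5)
Your argument is correct and is essentially the paper's own proof: the paper likewise fixes a pairing function, decodes each diamond guess into $\lambda$-many columns, defines the family by ``some column equals the trace of $Y$ on $a$'', and proves normal independence by guessing the coded sequence of positive sets while pushing the negative requirements into a club on which the relevant traces are separated. Your version is in fact slightly leaner: the paper additionally codes the negative sets into the guess (via a pairing $\lambda\times\lambda\times 2\to\lambda$, giving columns $A^i_a$ and $B^i_a$) and builds a consistency clause ($\forall i,j\in a,\ A^i_a\neq B^j_a$) into its sets $R_A$; your club $C_0$ renders both unnecessary.

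One phrase does need repair: deducing $|\mathcal{I}|=2^\lambda$ by ``applying independence to a pattern separating $Y$ from $Y'$'' is circular, since normal independence applies only to \emph{disjoint} collections and hence already presupposes $A_Y\neq A_{Y'}$. The fix is immediate from your own machinery (and is how the paper argues distinctness): given $Y\neq Y'$, apply $\diamondsuit_{\kappa,\lambda}$ to the constant pattern $P_i=Y$ and intersect $S_{\hat Y}\cap C_1$ with the club of $a$ containing $\min(Y\triangle Y')$; every such $a$ has all columns equal to $Y\cap a\neq Y'\cap a$, so $a\in A_Y\setminus A_{Y'}$. In your independence proof, disjointness of the collections was used only to guarantee $P_{i'}\neq Q_i$, and here $Y\neq Y'$ is given outright, so nothing circular remains.
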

\begin{proof}
Let $\langle S_a \mid a \in P_\kappa\lambda\rangle$ be a $\diamondsuit_{\kappa,\lambda}(P_\kappa\lambda)$-sequence. Let $p \colon \lambda \times \lambda \times 2 \to \lambda$ be a bijection. Let $a \in P_\kappa\lambda$ be in the club of elements which contain $\{0,1\}$ and $p\restriction a \times a \times 2$ is a bijection between $a \times a \times 2$ and $a$. We may think of $S_a$ as a code for a sequence of pairs of subsets of $a$, $\langle A_a^i, B_a^i\mid i \in a\rangle$, by taking $A^i_a = \{\alpha \in a \mid p(i,\alpha, 0) \in S_a\}$ and $B^i_a = \{\alpha \in a \mid p(i,\alpha, 1) \in S_a\}$. For $a$ which is not in this club, we define $A^i_a = B^i_a = \emptyset$ for all $i\in a$. We abuse notation and write $S_a = \langle A_a^i, B_a^i \mid i \in a\rangle$. 

Let $\langle A_i, B_i \mid i < \lambda\rangle$ be a sequence of subsets of $\lambda$. Then, the set of all $a \in P_\kappa \lambda$ such that $S_a = \langle A_i \cap a, B_i \cap a \mid i \in a\rangle$ is stationary.

Let $A \subseteq \lambda$ and let us define 
\[R_A = \{a \in P_\kappa \lambda \mid \forall i, j \in a,\, A^i_a \neq B^j_a \text { and } \exists i \in a, A \cap a = A^a_i\}.\] 

Let us verify that $\mathcal{I} = \{R_A \mid A \subseteq \lambda\}$ is a normally independent family.  First, note that if $A \neq B$ then $R_A \neq R_B$, and in particular $|\mathcal{I}| = 2^\lambda$. Indeed, for stationary many $a\in P_\kappa\lambda$, $A^i_a = A\cap a$ and $B^i_a = B \cap a$ for all $i \in a$. Taking $a$ large enough so that $A \cap a \neq B \cap a$, we obtain $a \in R_A \setminus R_B$. 

Let us show the normal independence. Let $\{R_{A_i} \mid i < \lambda\}, \{R_{B_i} \mid i < \lambda\}$ be disjoint collections of elements in $\mathcal{I}$.  For every $a\in P_\kappa \lambda$ that guesses the sequence $\langle A_i, B_i \mid i < \lambda\rangle$ correctly and satisfies \[\{A_i \cap a \mid i \in a\} \cap \{B_j \cap a \mid j \in a\} = \emptyset,\]
 $a \in \bigcap_{i\in a} R_{A_i}$ and $a \notin \bigcup_{j\in a} R_{B_j}$.
\end{proof}

\begin{lemma}\label{lemma: diagonal intersection once}
Let $\kappa \leq \lambda$ be regular cardinals. 
Let $\mathcal{F}$ be the minimal normal and fine filter on $P_\kappa\lambda$ that contains a set $\mathcal{A}$. Then $\mathcal{F}$ is non-trivial if and only if for every sequence $\langle A_i \mid i < \lambda\rangle$ of elements of $\mathcal{A}$, $\Delta_{i < \lambda} A_i$ is stationary.
\end{lemma}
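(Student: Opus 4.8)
The plan is to give an explicit basis for the minimal normal and fine filter $\mathcal{F}$ generated by $\mathcal{A}$ and then read off non-triviality directly from this basis. Concretely, I would consider the family
\[
\mathcal{B} = \{\, C \cap \Delta_{i<\lambda} A_i \mid C \subseteq P_\kappa\lambda \text{ is a club and } A_i \in \mathcal{A}\cup\{P_\kappa\lambda\}\,\},
\]
where allowing some coordinates to be padded by the whole space $P_\kappa\lambda$ lets us treat clubs (all $A_i=P_\kappa\lambda$) and single members of $\mathcal{A}$ uniformly inside one format. The whole point of the lemma then becomes the claim that $\mathcal{B}$ is a basis for $\mathcal{F}$: granting this, a basis element $C\cap\Delta_i A_i$ is empty exactly when $\Delta_i A_i\cap C=\emptyset$, i.e.\ when $\Delta_i A_i$ is non-stationary, and the stated equivalence drops out.

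The main work, and the step I expect to be the principal obstacle, is verifying that $\mathcal{B}$ generates a normal and fine filter at all — in particular that it is closed, up to supersets, under intersections and under diagonal intersections, despite the fact that the pointwise intersection $A_i\cap A_i'$ of two members of $\mathcal{A}$ need not lie in $\mathcal{A}$. I would resolve this by a flattening argument through pairing functions. Fixing a bijection $q\colon \lambda\times\lambda\to\lambda$, the set $C_q=\{a\mid q[a\times a]=a\}$ is a club, and for a double array $\langle A^i_j \mid i,j<\lambda\rangle$ from $\mathcal{A}\cup\{P_\kappa\lambda\}$ one sets $E_{q(i,j)}=A^i_j$; then for $a\in C_q$ we have $k\in a \iff k=q(i,j)$ with $i,j\in a$, so that
\[
\{a\mid \forall i,j\in a,\ a\in A^i_j\}\cap C_q = \Delta_{k<\lambda} E_k \cap C_q .
\]
Using the two coordinates to encode, respectively, a finite (or $<\kappa$) list of sets and the running diagonal index, this identity turns any finite intersection $\Delta_i A_i\cap\Delta_i A_i'=\Delta_i(A_i\cap A_i')$ and any iterated diagonal intersection $\Delta_i(C^i\cap\Delta_j A^i_j)=\Delta_i C^i\cap\{a\mid\forall i,j\in a,\ a\in A^i_j\}$ back into the basis shape $(\text{club})\cap\Delta_k E_k$, invoking that the club filter is itself normal (so $\Delta_i C^i$ is again a club) and $\kappa$-complete. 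Crucially, none of this uses the stationarity hypothesis, so $\mathcal{B}$ is a basis for a normal fine filter unconditionally.

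Finally I would settle minimality and then compute non-triviality. Any normal and fine filter $\mathcal{G}\supseteq\mathcal{A}$ contains the club filter (every fine normal filter does), contains each $A_i$, hence by normality each $\Delta_i A_i$, and by closure under intersection every $C\cap\Delta_i A_i$; thus $\mathcal{B}\subseteq\mathcal{G}$, and the filter generated by $\mathcal{B}$ is the minimal one, namely $\mathcal{F}$. For the equivalence, note $\mathcal{F}$ is non-trivial iff no basis element is empty. If every $\Delta_i A_i$ with all $A_i\in\mathcal{A}$ is stationary, then no basis element can be empty: replacing any padded coordinate $A_i=P_\kappa\lambda$ by a fixed element of $\mathcal{A}$ only shrinks $\Delta_i A_i$, and a subset of a non-stationary set is non-stationary, so an empty padded basis element would already exhibit a non-stationary diagonal intersection with all coordinates in $\mathcal{A}$. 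Conversely, if some $\Delta_i A_i$ with $A_i\in\mathcal{A}$ is non-stationary, it is disjoint from a club $C$, whence $C\cap\Delta_i A_i=\emptyset\in\mathcal{F}$ and $\mathcal{F}$ is trivial. This is precisely the asserted equivalence.
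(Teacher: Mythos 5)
Your proposal is correct and takes essentially the same route as the paper: the paper also identifies the minimal normal fine filter explicitly as the collection of supersets of sets of the form $D \cap \Delta_{i<\lambda} A_i$ with $D$ a club and $A_i \in \mathcal{A}$, and verifies closure under diagonal intersections by exactly your flattening idea, namely that a doubly-indexed diagonal intersection $\Delta_{j<\lambda}\left(\Delta_{i<\lambda} A^j_i\right)$ agrees on a club with the single diagonal intersection of an enumeration of the array. Your explicit pairing function $q$, the club $C_q$, and the padding by $P_\kappa\lambda$ just spell out details the paper's terser proof leaves implicit.
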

\begin{proof}
Since every normal filter contains the club filter, the condition of the lemma is clearly necessary. Let us show that it is sufficient. We claim that 
\[\mathcal{F} = \{ X \subseteq P_\kappa\lambda \mid X \supseteq D \cap \Delta_{i < \lambda} A_i, \,D \text{ is a club, } A_i \in \mathcal{A}\}.\]

Let us show that this collection is closed under diagonal intersections. Let $\langle A^j_i \mid i, j < \lambda\rangle$ be some collection of elements in $\mathcal{A}$ and pick a sequence $\langle B_i \mid i < \lambda\rangle$ that enumerates them. Indeed, the diagonal intersection $\Delta B_i$ and $\Delta_{j < \lambda} \left( \Delta_i A_i^j \right)$ are the same on some club.   
\end{proof}

The last two ingredients for the proof are the following facts:
\begin{fact}[{\cite[Proposition 7.4 and Corollary 5.5]{Matet2015}}]\label{fact: diamond}
If $\kappa$ is subtle then $\diamondsuit_{\kappa,\lambda}(P_\kappa\lambda)$ holds.
\end{fact}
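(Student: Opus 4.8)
The plan is to obtain the $\diamondsuit_{\kappa,\lambda}(P_\kappa\lambda)$-sequence by reflecting the problem down to the subtlety of $\kappa$ itself. Recall that $\kappa$ is \emph{subtle} if for every club $D \subseteq \kappa$ and every sequence $\langle B_\alpha \mid \alpha < \kappa\rangle$ with $B_\alpha \subseteq \alpha$ there are $\alpha < \beta$ in $D$ with $B_\alpha = B_\beta \cap \alpha$. The strategy has two layers. First, I would transfer this single-cardinal coherence to the two-cardinal statement that for every $\langle A_a \mid a \in P_\kappa\lambda\rangle$ with $A_a \subseteq a$ and every club $C \subseteq P_\kappa\lambda$ there are $a \subsetneq b$ in $C$ with $A_a = A_b \cap a$. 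Second, I would convert this coherence property into a genuine guessing sequence by a recursion that plants local counterexamples, exactly as in the classical derivation of $\diamondsuit_\kappa$ from subtlety of $\kappa$.

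For the transfer, fix $\langle A_a\rangle$ and a club $C$, fix a large regular $\theta$, and build a continuous $\subseteq$-increasing elementary chain $\langle M_\xi \mid \xi < \kappa\rangle$ of submodels of $(H(\theta), \in, <^*, \langle A_a\rangle, C)$ with $|M_\xi| < \kappa$, $M_\xi \cap \kappa \in \kappa$, and $\langle M_\eta \mid \eta \le \xi\rangle \in M_{\xi+1}$. Writing $a_\xi = M_\xi \cap \lambda$ and $\delta_\xi = M_\xi \cap \kappa$, the set $\{\delta_\xi \mid \xi < \kappa\}$ is a club in $\kappa$, and by closure of $C$ under increasing unions of length $<\kappa$ together with elementarity each $a_\xi$ lies in $C$ with $a_\xi \subsetneq a_\eta$ for $\xi < \eta$. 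Using a bijection $a_\xi \to \delta_\xi$ that is canonical inside $M_{\xi+1}$, I would code $A_{a_\xi}$ as a subset $B_\xi \subseteq \delta_\xi$, extend $\langle B_\xi \mid \xi<\kappa\rangle$ to a sequence indexed by $\kappa$, and apply subtlety of $\kappa$ along the club $\{\delta_\xi\}$. This yields $\xi < \eta$ with $B_\xi = B_\eta \cap \delta_\xi$; pulling back through the coding bijections gives $A_{a_\xi} = A_{a_\eta} \cap a_\xi$ with $a_\xi \subsetneq a_\eta$ in $C$, as required.

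For the second layer, I would build the sequence so that every alleged counterexample is refuted by such a coherent pair. Using a continuous cofinal $\langle \gamma_\nu \mid \nu < \cf\lambda\rangle$ below $\lambda$ as reflection levels (where $P_\kappa\gamma_\nu$ carries real club structure, unlike individual small sets), I would define $\langle A_a\rangle$ by recursion so that whenever $\langle A_a \mid a \in P_\kappa\gamma_\nu\rangle$ fails to be a $\diamondsuit_{\kappa,\gamma_\nu}(P_\kappa\gamma_\nu)$-sequence I attach a witnessing pair $(Y_\nu, C_\nu)$, $Y_\nu \subseteq \gamma_\nu$ and $A_a \neq Y_\nu \cap a$ for all $a \in C_\nu$, and plant $A_b = Y_\nu \cap b$ on a club of sets $b$ entering at the next level, coding $C_\nu$ alongside. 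Feeding the resulting sequence, together with a diagonal intersection of the attached clubs, into the two-cardinal coherence property from the first layer produces $a \subsetneq b$ with $A_a = A_b \cap a$ and $a \in C_\nu$ for the level $\nu$ carried by $b$; then $A_a = A_b \cap a = Y_\nu \cap a$ contradicts $A_a \neq Y_\nu \cap a$. Hence failure cannot persist, every $Y \subseteq \lambda$ is guessed, and running the same reflection inside an arbitrary prescribed club shows the guessing set meets it, i.e.\ is stationary.

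The main obstacle is the coherence of the coding maps in the transfer step. The naive choice, the transitive collapses $a_\xi \to \mathrm{ot}(a_\xi)$, does \emph{not} cohere along the chain, since $a_\eta$ interleaves new ordinals among those of $a_\xi$, so $\mathrm{ot}(a_\xi)\neq \mathrm{ot}(a_\eta)\restriction a_\xi$; consequently a fixed $\diamondsuit_\kappa$-sequence cannot track the moving collapsed target, and this is precisely the point where the full coherence-finding strength of subtlety, applied to $\subseteq$ directly rather than through collapses, is needed (mere inaccessibility, which already gives $\diamondsuit_\kappa$, does not suffice). The remaining care is bookkeeping: choosing the coding bijections canonically enough to be pulled back while keeping each $a_\xi$ inside $C$, and arranging the planted clubs $C_\nu$ so that their diagonal intersection still lets the coherent pair land in the $C_\nu$ matching the plant carried by $b$. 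Once this is arranged, the first and third paragraphs are routine.
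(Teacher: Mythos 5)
Your two--layer architecture --- first transfer subtlety of $\kappa$ to a two-cardinal coherence property on $P_\kappa\lambda$ (for every $\langle A_a\mid a\in P_\kappa\lambda\rangle$ and club $C$ there are $a\subsetneq b$ in $C$ with $A_a=A_b\cap a$), then run a Jensen--Kunen-style counterexample-planting argument --- is the right shape: the paper imports this Fact from Matet, and the two cited results compose in exactly this way (a subtlety-type property of $P_\kappa\lambda$ from subtlety of $\kappa$, then $\diamondsuit_{\kappa,\lambda}$ from that property). But your first layer is left with a hole you yourself flag and never close: ``applying subtlety to $\subseteq$ directly rather than through collapses'' is not a licensed move, since subtlety of $\kappa$ speaks only about subsets of ordinals along clubs in $\kappa$; invoked on $\subseteq$ it is precisely the two-cardinal property you are trying to prove. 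The fix is available but is not the one you reach for: the coding bijections need not be canonical or definable, only \emph{coherent along the chain}, and this you can simply build by recursion in $V$ --- take $e_0\colon a_0\to\delta_0$ arbitrary, extend $e_{\xi+1}\supseteq e_\xi$ at successors (arranging $|a_{\xi+1}\setminus a_\xi|=|\delta_{\xi+1}\setminus\delta_\xi|$ by letting $|M_{\xi+1}|$ jump), take unions at limits --- and then apply subtlety to the sequence $\langle e_\xi[A_{a_\xi}]\rangle$ along the club $\{\delta_\xi\mid\xi<\kappa\}$; coherence of the $e_\xi$ makes the pullback valid. Your requirement ``canonical inside $M_{\xi+1}$'' is the wrong desideratum. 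Also, your parenthetical that mere inaccessibility already gives $\diamondsuit_\kappa$ is false (failure of $\diamondsuit_\kappa$ at inaccessible, even weakly compact, cardinals is consistent; this is exactly why Jensen--Kunen needed subtlety), though this does not affect the argument's structure.

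The second layer is the genuine gap. Your contradiction needs the coherent pair $a\subsetneq b$ to straddle a level boundary in a very specific way: $b$ must be a planted set whose entering level is $\nu+1$, with $A_b=Y_\nu\cap b$, while $a\subseteq\gamma_\nu$ and $a\in C_\nu$. Layer one gives you no such control: it produces an arbitrary pair of traces along one elementary chain, and both may enter at the same level; worse, any continuous chain has limit models whose traces enter at limit levels, where your recursion plants nothing, so unusable pairs are unavoidable. Diagonally intersecting the clubs $C_\nu$ constrains membership, not this geometry. Underneath this is a structural obstruction your sketch does not confront: $\subsetneq$ on $P_\kappa\lambda$ is ill-founded (consider $b_n=\omega\setminus n$), so the Jensen--Kunen recursion ``at each stage choose a counterexample over its own initial segment'' has no direct analogue, and your filtration $\langle\gamma_\nu\mid\nu<\cf\lambda\rangle$ does not restore it --- a counterexample at level $\nu$ is refuted only by coherence \emph{between chosen witnesses}, not by planting one value at the next level. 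The standard resolution is to present clubs by generating functions $f\colon[\lambda]^{<\omega}\to\lambda$, so that a local counterexample at $b$ is a pair $(A,f)$ with $f\colon[b]^{<\omega}\to b$, to choose such pairs along a well-ordering of $P_\kappa\lambda$ (in type $\lambda^{<\kappa}$, which also keeps the Fact available when $\lambda^{<\kappa}>\lambda$), and to apply the layer-one coherence to the codes of the chosen pairs, value and witness together, so that ``$a$ is closed under $f_b$'' falls out of coherence of the second coordinates exactly as $\alpha\in C_\beta$ falls out of closedness in the one-cardinal proof; you also need, and omit, the fact that a club of $P_\kappa\lambda$ restricts to a club of $P_\kappa\gamma$ for suitably many $\gamma$, which itself uses the generating-function presentation. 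Without these ingredients the planting scheme does not close, so as it stands the proposal proves neither layer.
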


\begin{fact}[{\cite[Theorem 22.17]{Kanamori2008HigherInfinite}}]\label{fact: strong compactness}
There is a fine $\kappa$-complete ultrafilter on $P_\kappa\lambda$ if and only if every $\kappa$-complete filter which is generated by at most $\lambda$ many sets can be extended to a $\kappa$-complete ultrafilter.
\end{fact}

\begin{proof}[Proof of the main theorem]
Note that if the normal filter extension property holds for $\lambda \geq \kappa$ then there is a $\kappa$-complete fine ultrafilter on $P_\kappa \lambda$ (by extending the club filter). Thus, $\kappa$ is measurable and, in particular, subtle. Therefore, by Fact \ref{fact: diamond}, $\diamondsuit_{\kappa,\lambda}(P_\kappa\lambda)$ holds. 

Using Lemma \ref{lemma: normally ind} we can construct a normally independent family of size $2^\lambda$. Let $\mathcal{I} = \{R_X \mid X  \subseteq \lambda\}$ be such a family. For $Y \subseteq \lambda$, we denote $\bar{Y} = \lambda \setminus Y$ and for $Y \subseteq P_\kappa\lambda$, $\bar{Y} = P_\kappa\lambda \setminus Y$.

Let $\mathcal{F}$ be a $\kappa$-complete filter on $\lambda$. We will construct a normal filter $\mathcal{F}'$ such that every extension of $\mathcal{F}'$ to a $\kappa$-complete ultrafilter corresponds to an extension of $\mathcal{F}$ to a $\kappa$-complete ultrafilter. 

Let $\mathcal{A}$ be the following collection of sets: 
\begin{enumerate}
\item $R_X \in \mathcal{A}$ for all $X\in \mathcal{F}$.
\item  For every $X \subset Y \subseteq \lambda$ ($X \neq Y$), $\bar{R}_X \cup R_Y \in \mathcal{A}$.
\item For every $X \subseteq \lambda$, $R_X \cup R_{\bar{X}} \in \mathcal{A}$ and $\bar{R}_{\emptyset} \in \mathcal{A}$. 
\item For every sequence of sets $\langle X_i \mid i < i_\star\rangle$, $i_\star < \kappa$, $R_{\bigcap X_i} \cup \left(\bigcup_{i < i_\star} \bar{R}_{X_i}\right)\in \mathcal{A}$.  
\end{enumerate}

\begin{claim} If $\mathcal{A} \subseteq \mathcal{U}$ is a $\kappa$-complete ultrafilter then $\tilde{\mathcal{U}} = \{X \subseteq \lambda \mid R_X \in \mathcal{U}\}$ is a $\kappa$-complete ultrafilter that extends $\mathcal{F}$.
\end{claim}
\begin{proof}
Let us verify that $\tilde{\mathcal{U}}$ is an ultrafilter. Using (2), for every $X \subseteq Y$, if $R_X \in \mathcal{U}$ then also $R_Y \in \mathcal{U}$, as $\bar{R}_X \cup R_Y \in \mathcal{A}$ and $R_X \cap \left(\bar{R}_X \cup R_Y\right) \subseteq R_Y$. Similarly, using (3), for every $X$, either $X$ or $\lambda \setminus X$ is in $\tilde{\mathcal{U}}$.  Since $R_{\emptyset}\notin \mathcal{U}$, we conclude that $\emptyset \notin \tilde{\mathcal{U}}$.  Finally, we need to show that $\tilde{\mathcal{U}}$ is closed under intersections of length $<\kappa$.  Let $\langle X_i \mid i < i_\star\rangle$ be a sequence of elements of $\tilde{\mathcal{U}}$ and $i_\star < \kappa$. Then by the $\kappa$-completeness of $\mathcal{U}$, $\bigcap R_{X_i} \in \mathcal U$. Therefore, using (4), $R_{\bigcap X_i} \in \mathcal{U}$, and thus $\bigcap X_i \in \tilde{\mathcal{U}}$. 
\end{proof}

Every element of $\mathcal{A}$ except $\bar{R}_{\emptyset}$ is of one of the following forms: $R_X$, $\bar{R}_X \cup R_Y$, $R_X  \cup R_{\bar{X}}$ or $R_{\bigcap X_i} \cup \left(\bigcup_{i < i_\star} \bar{R}_{X_i}\right)$. For $A \in \mathcal{A}$ there is a unique representation, by the independence of $\mathcal{I}$, and we let $d(A)$ be the  collection of $X \subseteq \lambda$ such that $R_X$ appears in the representation of $A$. Explicitly, for $A = \bar{R}_{\emptyset}$, $d(A) = \{\emptyset\}$, for $A = R_X$, $d(A) = \{X\}$, for $A = \bar{R}_X \cup R_Y$ $d(A) = \{X, Y\}$, for $A = R_X  \cup R_{\bar{X}}$, $d(A) = \{X, \bar{X}\}$ and for $A = R_{\bigcap X_i} \cup \left(\bigcup_{i < i_\star} \bar{R}_{X_i}\right)$, $d(A) = \{X_i \mid i < i_\star\} \cup \{ \bigcap X_i\}$. 

Let $\mathcal{F}'$ be the minimal normal filter extending $\mathcal{A}$. By Lemma \ref{lemma: diagonal intersection once}, in order to verify that $\mathcal{F}'$ is a proper filter it is sufficient to verify that every diagonal intersection of elements from $\mathcal{A}$ is stationary.

Let $\langle B_i \mid i < \lambda\rangle$ be a list of elements in $\mathcal{A}$. Let $\mathcal{D} = \bigcup_{i < \lambda} d(B_i)$ and $\mathcal{B} = \mathcal{D} \cap \mathcal{F}$. Clearly, $ |\mathcal{B}| \leq |\mathcal{D}| \leq\lambda$. By Fact \ref{fact: strong compactness} and the existence of a fine $\kappa$-complete ultrafilter on $P_\kappa\lambda$, there is a $\kappa$-complete ultrafilter $\mathcal{U}'$ that extends the $\kappa$-complete filter generated by $\mathcal{B}$.  Let $\langle X_i \mid i < \lambda\rangle$ enumerate $\mathcal{D}$ (with repetitions, if needed).

\begin{claim}\label{claim:di}
For each $i < \lambda$, if $D_i = d(B_i)$ then 
\[\bigcap_{X \in D_i \cap \mathcal{U}'} R_X  \setminus \bigcup_{Y \in D_i \setminus \mathcal{U}'} R_Y = \bigcap_{X \in D_i \cap \mathcal{U}'} R_X  \cap \bigcap_{Y \in D_i \setminus \mathcal{U}'} \bar{R}_Y \subseteq B_i.\]
\end{claim}
\begin{proof}
We split into cases, according to the representation of $B_i$:
\begin{enumerate}
\item $B_i = R_X$ for $X \in \mathcal{F}$. In this case $X \in \mathcal{B} \subseteq \mathcal{U}'$.
\item $B_i = \bar{R}_X \cup R_Y$ for $X \subseteq Y$, then either $X \in \mathcal{U}'$ and then so is $Y$, or that $Y \notin \mathcal{U}'$ and then also $X\notin \mathcal{U}'$. 
\item $B_i = R_X \cup R_{\bar{X}}$. In this case, either $X \in \mathcal{U}'$ or $\bar{X} \in \mathcal{U}'$. If $B_i = \bar{R}_\emptyset$, then $\emptyset \notin \mathcal{U}'$.
\item $B_i = R_{\bigcap X_j} \cup \left(\bigcup_{j < j_\star} \bar{R}_{X_j}\right)$. In this case we use the $\kappa$-completeness of $\mathcal{U}'$. If $\bigcap X_j \notin \mathcal{U}'$ then there must be some $j$ such that $X_j \notin \mathcal{U}'$.   
\end{enumerate} 

\end{proof}
Let us consider 
$S = \Delta_{X_i \in \mathcal{U}'} R_{X_i} \setminus \nabla_{X_j \notin \mathcal{U}'} R_{X_j}$. 
By the normal independence of $\mathcal{I}$, $S$ is stationary. Let $D$ be the club of all $a\in P_\kappa\lambda$ such that for all $\alpha \in a$, if $d(B_\alpha) = \{X_{\beta_i} \mid i < i_\star\}$ then $\beta_i\in a$ for all $i < i_\star$. 

For every $a \in S$, let $C_a$ be
\[\bigcap_{\alpha \in a,\, X_\alpha \in \mathcal{U}'} R_{X_\alpha} \setminus \bigcup_{\beta\in a,\, X_\beta \notin \mathcal{U}'} R_{X_\beta},\]
by the definition of $S$, $a \in C_a$. If we further assume that $a \in D$, then $C_a \subseteq B_i$ for all $i \in a$ by Claim \ref{claim:di}. Thus, $S \cap D \subseteq \Delta_i B_i$, as wanted. 
\end{proof}

Note that even for $\kappa \leq \lambda <\lambda^{<\kappa}$, 
the normal filter extension property on $P_\kappa\lambda$ implies the filter extension property for filters on $\lambda$, since Lemma \ref{lemma: normally ind} still holds.  

The other direction might fail. For example, consider the case $\lambda = \kappa^{+\omega}$ and assume $\mathrm{GCH}$. In order to extend filters on $\lambda$ one only need to consider $2^\lambda = \kappa^{+\omega+1}$ many sets, while in order to extend filters on $P_\kappa \lambda$ one has to consider $\kappa^{+\omega+2}$ many sets. In particular, in a model of $\mathrm{GCH}$ and level-by-level correspondence between supercompact and strong compact (such as \cite{ApterShelah1997}), the least cardinal $\kappa$ for which the club filter on $P_{\kappa}{\kappa^{+\omega}}$ can be extended to a $\kappa$-complete ultrafilter is $\kappa^{+\omega+1}$-supercompact. $\kappa$ is strictly above the least cardinal $\mu$ for which every $\mu$-complete filter on $\mu^{+\omega}$ can be extended to a $\mu$-complete ultrafilter, by a reflection argument: Let $j \colon V \to M$ be a $\kappa^{+\omega+1}$-supercompact embedding. Then since in $V$, every $\kappa$-complete filter on $\kappa^{+\omega}$ can be extended to an ultrafilter then the same holds in $M$, using its closure under $\kappa^{+\omega+1} = 2^{\kappa^{+\omega}}$-sequences. Therefore, the least cardinal $\mu$ such that every $\mu$-complete filter on $\mu^{+\omega}$ can be extended to a $\mu$-complete ultrafilter is below $\kappa$.

\section*{acknowledgements}
I would like to thank the anonymous referee for their useful suggestions for improving this paper.

\providecommand{\bysame}{\leavevmode\hbox to3em{\hrulefill}\thinspace}
\providecommand{\MR}{\relax\ifhmode\unskip\space\fi MR }
\providecommand{\MRhref}[2]{%
  \href{http://www.ams.org/mathscinet-getitem?mr=#1}{#2}
}
\providecommand{\href}[2]{#2}

\end{document}